\newcommand{\seqk}[1]{\{ #1 \}_{k\geq0}}
\newcommand{\hf}{\frac{1}{2}}
\newcommand{\norm}[1]{\|{#1}\|_2}
\newcommand{\E}{{\mathbb E}}
\newcommand{\R}{{\mathbb R}}
\newcommand{\Rn}{{\mathbb R}^n}
\renewcommand{\i}{{\mathbf i}}
\newtheorem{definition}{Definition}[section]
\newtheorem{theorem}[definition]{Theorem}
\newtheorem{lemma}[definition]{Lemma}
\newtheorem{remark}[definition]{Remark}
\renewcommand{\t}{^\top}
\renewcommand{\i}{_i}
\newcommand{\s}{^\star}
\renewcommand{\k}{^{(k)}}
\newcommand{\kp}{^{(k+1)}}
\newcommand{\km}{^{(k-1)}}
\newcommand{\zo}{^{(0)}}
\newcommand{\Ik}{\mathcal{I}_k}
\newcommand{\inn}[1]{\langle #1 \rangle}
\newcommand{\innL}[1]{\bigg\langle #1 \bigg\rangle}
\newcommand{\nm}[1]{\| #1 \|_2}
\newcommand{\nmsq}[1]{\| #1 \|_2^2}
\newcommand{\nmsqL}[1]{\left\| #1 \right\|^2}
\newcommand{\nmfsq}[1]{\| #1 \|^2_F}
\newcommand{\ik}{_{i_k}}
\newcommand{\ikm}{_{i_{k-1}}}
\newcommand{\ikmm}{_{i_{k-2}}}
\newcommand{\bk}{\beta_k}
\newcommand{\bkm}{\beta_{k-1}}
\newcommand{\prob}{\operatorname{Prob}}
\newcommand{\sima}{\sigma_{\max}}
\newcommand{\simi}{\sigma_{\min}}
\newcommand{\MAXTERM}{\max\limits_{i \in [m]} \left\{ \frac{ |\res{\i}{x\k}|^2}{\nmsq{a\i}} \right\}}
\newcommand{\err}[1]{\nmsq{#1-x\s}}
\newcommand{\res}[2]{\inn{a#1,#2}-b#1}
\newcommand{\fres}[2]{\frac{\inn{a#1,#2}-b#1}{\nmsq{a#1}}}
\date{}
\begin{document}

	\title[GMIRK for linear systems]{On greedy multi-step inertial randomized Kaczmarz method for solving linear systems}
	
	\author{Yansheng Su}
	\address{School of Mathematical Sciences, Beihang University, Beijing, 100191, China. }
	\email{suyansheng@buaa.edu.cn}
	
	\author{Deren Han}
	\address{LMIB of the Ministry of Education, School of Mathematical Sciences, Beihang University, Beijing, 100191, China. }
	\email{handr@buaa.edu.cn}

	\author{Yun Zeng}
	\address{School of Mathematical Sciences, Beihang University, Beijing, 100191, China. }
	\email{zengyun@buaa.edu.cn}
	
	\author{Jiaxin Xie}
	\address{LMIB of the Ministry of Education, School of Mathematical Sciences, Beihang University, Beijing, 100191, China. }
	\email{xiejx@buaa.edu.cn}
	
	\maketitle
	
	\begin{abstract}
		The multi-step inertial randomized Kaczmarz (MIRK) method is an iterative method for solving large-scale linear systems. In this paper, 	we enhance the MIRK method by incorporating the greedy probability criterion, coupled with the introduction of a tighter threshold parameter for this criterion. We prove that the proposed greedy MIRK (GMIRK) method enjoys an improved deterministic linear convergence  compared to both the MIRK method and the greedy randomized Kaczmarz method. 
			Furthermore, we exhibit that the multi-step inertial extrapolation approach can be geometrically interpreted as an orthogonal projection method, and  establish its relationship with the sketch-and-project method in (SIAM J. Matrix Anal. Appl. 36(4):1660-1690, 2015) and the oblique projection technique in (Results Appl. Math. 16:100342, 2022).
			Numerical experiments are provided to confirm our results.
	\end{abstract}
	\let\thefootnote\relax\footnotetext{Key words: Kaczmarz method, greedy probability criterion, inertial extrapolation, deterministic linear convergence, sketch-and-project, oblique projection}
	
	\let\thefootnote\relax\footnotetext{Mathematics subject classification (2020): 65F10, 65F20, 90C25, 15A06, 68W20}
	
\section{Introduction}
We consider the following system of linear equations
\begin{equation}
	\label{main-prob}
	Ax=b,
\end{equation}
where $A\in\mathbb{R}^{m\times n}$ and $b\in\mathbb{R}^m$.  The problem for solving the linear system \eqref{main-prob} comes up in many
fields of scientific computing  and engineering, including computerized tomography \cite{hansen2021computed}, signal processing \cite{Byr04}, optimal control \cite{Pat17}, and machine learning \cite{Cha08}.
Throughout this paper, we assume that the linear system \eqref{main-prob} is consistent, i.e. there exists a $ x\s $ such that $ Ax\s = b $.

The Kaczmarz method \cite{kaczmarz1937angenaherte}, also known as \textit{algebraic reconstruction technique} (ART) \cite{herman1993algebraic,gordon1970algebraic}, is a classic and effective row-action method for solving the linear system \eqref{main-prob}. For any $i\in[m]:=\{1,\ldots,m\}$, let $ a\i $ denote the transpose of the $i$-th row of $ A $ and $b_i$  denote the $i$-th  entry of $b$. Starting from an initial point  $x^{(0)}\in\mathbb{R}^n$, the original Kaczmarz method constructs the next iterate $x\kp$ via the following update rule
\begin{equation}\label{equ:basiciter}
	x\kp = x\k - \frac{a\ik\t x\k - b\ik}{\nmsq{a\ik}} a\ik,
\end{equation}
where the index $ i_k = (k \mod m) +1$.
In some cases, the Kaczmarz method may converge slowly when the rows of the coefficient matrix  are in an unfavorable order. This phenomenon may occur, for example, when there are two adjacent rows that are very close to each other. To address this issue, one approach is to select a working row from the matrix $A$ randomly rather than sequentially \cite{herman1993algebraic,natterer2001mathematics,feichtinger1992new,sun2021worstcase}.
The celebrated result of Strohmer and Vershynin \cite{strohmer2009randomized} shows that if the index $ i_k$ is selected randomly with probability proportional to $\|a_{i_k}\|^2_2$,  the resulting \textit{randomized  Kaczmarz} (RK) method converges linearly in expectation. This result has sparked significant interest in RK-type methods due to their computational efficiency and scalability.
Recently, Li et al \cite{LI2022100342} have introduced and analyzed the Kaczmarz method with oblique projection, where the next iteration is chosen to be on the intersection of two hyperplanes. Theoretical analysis demonstrated that the convergence rate of the RK method with oblique projection is faster than the RK method.

It is widely acknowledged that  using a better probability criterion can result in a more favorable order of working rows, thereby accelerating the convergence of the RK method.  To enhance the convergence property of the RK method,
Bai and Wu \cite{bai2018greedy} first introduced the \textit{greedy probability criterion} and developed the
\textit{greedy randomized Kaczmarz} (GRK) method for solving the linear system \eqref{main-prob}.
By employing this greedy probability criterion, entries of small magnitude in the residual vector $Ax\k-b$ are excluded from selection, ensuring the progress in each iteration of GRK. This property also leads to a faster convergence rate of GRK compared to RK.
In recent years,  there  has been a large amount of work on the refinements and extensions of the GRK method, such as the capped adaptive  sampling rule \cite{gower2021adaptive,yuan2022adaptively}, the greedy augmented randomized Kaczmarz method for inconsistent linear systems \cite{bai2021greedy}, the greedy randomized coordinate descent method \cite{bai2019greedy}, the  momentum variant of the GRK method \cite{su2023linear}, and the capped nonlinear Kaczmarz method \cite{zhang2022greedy}. In addition, we note that there has also been some work on non-random Kaczmarz methods \cite{chen2022fast,niu2020greedy,zhang2019new} inspired by the GRK method. We refer to \cite{bai2023convergence} and\cite{bai2023randomized} for recent surveys  on the Kaczmarz method.

The inertial extrapolation,  initially introduced by Polyak \cite{polyak1964some},  has been widely used
to improve the performance of iterative algorithms. The main characteristic of inertial type methods is that  the next iteration is determined by utilizing the previous two iterates. Recently, the inertial extrapolation approach has been introduced to accelerate the  RK method \cite{he2023inertial}, resulting in the development of two variants of the inertial RK method: the alternated inertial RK (AIRK) method and the multi-step inertial RK (MIRK) method.
The authors showed that the MIRK method is superior to the AIRK method in both theoretical analysis and practical applications.
However, the geometric explanation of MIRK is rather vague while the AIRK method has a connection with the two-subspace Kaczmarz method \cite{needell2013two,wu2022two}.

In this paper, we incorporate the greedy probability criterion into the MIRK method and propose the greedy MIRK (GMIRK) method for solving linear systems.
Particularly, we propose a tighter threshold parameter for the greedy probability criterion.
Theoretical analysis shows that the GMIRK method achieves a faster convergence rate compared to  both the MIRK method and the GRK method.
Furthermore, linear convergence of the proposed GMIRK is based on
the quantity $\nmsq{x\k -x\s}$, rather than its expectation $ \E [\nmsq{x\k -x\s} ]$, which is commonly used in the literature 
\cite{bai2018greedy,bai2021greedy}. We refer such convergence as \textit{deterministic}.
Numerical results demonstrate that the GMIRK method outperforms the GRK method in terms of both iteration counts and computing times, especially when the rows are highly coherent.

In addition, we give an elaborate geometric explanation for the multi-step inertial extrapolation approach. Recall that the Kaczmarz method orthogonally projects the iterate $ x\k $ onto the solution set $ \{ x \ | \ \inn{a\ik,x} = b\ik\} $ at each iteration. In this paper, we exhibit that the Kaczmarz method with multi-step inertial extrapolation approach actually projects $ x\k $ onto the solution set of two successive working rows
\[
\left\{ x \ \middle\vert \
\begin{bmatrix} a\ikm^\top \\a\ik^\top \end{bmatrix} x = \begin{bmatrix} b\ikm \\b\ik \end{bmatrix}  \right\}.
\]
This geometric explanation provides further insight into why the multi-step inertial extrapolation approach is faster.
We further establish the connection between our algorithm and the sketch-and-project method \cite{15M1025487,19M1285846}, which is a general framework of randomized iterative methods for linear systems. We also exhibit that the randomized Kaczmarz method with oblique projection \cite{LI2022100342} can also be interpreted properly within these geometric perspectives.

\subsection{Notations and preliminaries}
We here give some notations that will be used in the paper. For vector $x\in\mathbb{R}^n$, we use $x_i,x\t$ and $\|x\|_2$ to denote the $i$-th entry, the transpose and the Euclidean norm of $x$, respectively. For matrix $A\in\mathbb{R}^{m\times n}$, we use $a_i$, $A\t$, $A^\dagger$, $ \|A\|_F $, $ \operatorname{Range}(A) $, $\text{Rank}(A)$, $ \sima(A) $, and $ \simi(A) $ to denote the $i$-th row, the transpose, the Moore-Penrose pseudoinverse, the Frobenius norm, the range space, the rank, the largest and the smallest non-zero singular value of $ A $, respectively. For any $i\in[m]$, let $P_i$ denote the orthogonal projection operator onto the hyperplane  $ H_i = \{x \mid \langle a_i,x\rangle=b_i\}$, i.e. for any $x\in \mathbb{R}^n$,
$$P_{i}(x)=x-\frac{\langle a_i,x\rangle-b_i}{\|a_i\|^2_2}a_i.$$
We define
\begin{equation}\label{def-gamma}
	\gamma_1 := \max\limits_{i \in [m]} \sum\limits_{\substack{j=1, \\ j\neq i}}^{m} \nmsq{a_j}, \quad \gamma_2 := \max\limits_{\substack{i,j \in [m] \\ i\neq j}} \sum\limits_{\substack{k=1, \\ k\neq i,j}}^{m} \nmsq{a_k},
\end{equation}
and
\begin{equation}\label{def-delta}
	\delta_{\min}:=\min_{ i,j\in [m],i\neq j}\left|\innL{\frac{a_i}{\nm{a_i}},\frac{a_j}{\nm{a_j}}}\right|.
\end{equation}
Throughout this paper, we assume that for any $i,j\in[m]$ and $i\neq j$, $a_i$ and $a_j$ are linearly independent. Hence, it is obvious that  $ \gamma_2 < \gamma_1 < \nmfsq{A}$ and  $0\leq\delta_{\min}<1$.

\subsection{Organization}
The remainder of the paper is organized as follows. In Sectoin 2, we introduce the GMIRK, analyze its convergence and present its geometric explanation. In Section 3, we perform some numerical experiments to show the effectiveness of the proposed method. Finally, we conclude this paper in Section 4.

\section{The greedy multi-step inertial randomized Kaczmarz method}
In this section, we present the \textit{greedy multi-step inertial  randomized Kaczmarz} (GMIRK) method for solving the linear system \eqref{main-prob}. The algorithm is formally described in Algorithm \ref{algo:main}, where the parameter sequence $\{\beta_k\}_{k\geq0}$ is carefully designed (see Remark \ref{remark-xie-2}).

\begin{remark}
		GMIRK is namely a combination of GRK and MIRK. We apply a modified variant of greedy probability criterion of GRK for selecting the rows and use the iteration scheme from MIRK to operate the iterates. Based on Algorithm \ref{algo:main}, if one selects $i_k$ from $[m] \setminus \{i_k\} $ with probability 
		\[
		\prob(i_k = i) = \frac{\nmsq{a_i}}{\nmfsq{A}-\nmsq{a\ikm}},
		\]
		one will recover MIRK; If one applies the standard Kaczmarz iteration \eqref{equ:basiciter} for obtaining $x\kp$, one will get GRK (with the $\Gamma_k$ in $\varepsilon_k$ being fixed as $\nmfsq{A}$). One may refer to \cite{bai2018greedy} and \cite{he2023inertial} for more details of GRK and MIRK respectively.
	\end{remark}

\begin{algorithm}[htpb]
	\caption{Greedy multi-step inertial randomized Kaczmarz (GMIRK) method \label{algo:main}}
	\begin{algorithmic}
		\rmfamily
		\Require
		$A\in \mathbb{R}^{m\times n}$, $b\in \mathbb{R}^m$, $ \theta \in [0,1] $, $k=0$ and initial point $x\zo \in\mathbb{R}^n$.
		\begin{enumerate}
			\item[1:] Compute
			\begin{equation}\label{equ:alo-eps}
				\varepsilon_k = \frac{1}{2}\left(\frac{1}{\nmsq{Ax\k-b}} \max_{i\in[m]} \left\{ \frac{\left|a\i\t x\k- b\i\right|^2}{\nmsq{a\i}} \right\} + \frac{1}{\Gamma_k}\right).
			\end{equation}
			Here $ \Gamma_0 = {\nmfsq{A}} $, $ \Gamma_1 = {\gamma_1} $ and for $k\geq2$, $ \Gamma_k = {\gamma_2}$, where $\gamma_1$ and $\gamma_2$ are defined as \eqref{def-gamma}.
			\item[2:] Determine the index set
			\begin{equation}\label{alg-xie-1}
				\Ik = \left\{  i \ \left| \ \left|a\i\t x\k- b\i\right|^2 \geq \varepsilon_k \nmsq{Ax\k-b}\nmsq{a\i} \right. \right\}.
			\end{equation}
			\item[3:] Select $ i_k \in  \Ik $ according to some probability
			$$
			\prob(i_k = i) = p\k_i,
			$$			
			where $ p\k_i = 0 \operatorname{if} i \notin \Ik $, $ p\k_i \geq 0 \operatorname{if} i \in \Ik $, and $ \sum_{i\in\Ik} p\k_i=1 $.
			\item[4:]Compute	
			\[
			\bk = \left\{
			\begin{array}{ll}
				0, & \hbox{if $k=0$;} \\
				\frac{\inn{a\ik,a\ikm}(\res{\ik}{x\k})}{\nmsq{a\ik}\nmsq{a\ikm} - \inn{a\ik,a\ikm}^2}, & \hbox{otherwise.}
			\end{array}
			\right.
			\]
			\item[5:] Set
			\begin{equation}\label{xie-eq-0711-1}
				w\k = \left\{
				\begin{array}{ll}
					x\k, & \hbox{if $k=0$;} \\
					x\k + \bk a\ikm, & \hbox{otherwise.}
				\end{array}
				\right.
			\end{equation}
			\item[6:] Set
			\[
			x\kp = w\k - \frac{\res{\ik}{w\k}}{\nmsq{a\ik}}a\ik.
			\]
			\item[7:] If the stopping rule is satisfied, stop and go to output. Otherwise, set $k=k+1$ and return to Step $1$.
		\end{enumerate}
		
		\Ensure
		The approximate solution $x\k$.
	\end{algorithmic}
\end{algorithm}


It can be observed that when $k=0$, Algorithm \ref{algo:main} and the GRK method utilize the same iteration. For $k\geq1$, since $\langle a_{i_k}, x\k\rangle-b_{i_k}\neq 0$ (otherwise, $Ax\k-b=0$ would indicate the convergence of GMIRK), it can be verified that $x\k\neq w^{(k-1)}$.
Consequently,
$$
	x^{(k+1)}=P_{i_k}(x\k+\beta'_k(x\k-w^{(k-1)})) = P_{i_k}(x\k+\beta'_k(x\k-x^{(k-1)})-\beta'_k\beta_{k-1}a_{i_{k-2}}),
	$$
where
$$
\beta'_k=-\frac{(\langle a_{i_k}, x\k\rangle-b_{i_k})\langle a_{i_{k-1}},a_{i_k}\rangle \|a_{i_{k-1}}\|^2_2}{(\langle a_{i_{k-1}}, w^{(k-1)}\rangle-b_{i_{k-1}})(\|a_{i_{k-1}}\|^2_2\|a_{i_{k}}\|^2_2-\langle a_{i_{k-1}},a_{i_k}\rangle^2 )}.
$$
The inertial term $\beta'_k(x\k-x\km)$ implies that Algorithm \ref{algo:main} is a multi-step inertial algorithm \cite{dong2019mikm}. 
We refer the reader to Remark 4.1 in \cite{he2023inertial} for more details.

Comparing to the original greedy probability criterion \cite{bai2018greedy}, there are mainly two modifications in the row selection rule of our method. After determining $ \Ik $, the original criterion requires to compute an extra vector $ \tilde{r}\k $ defined component-wise by
\begin{equation}\label{rk-xie-0712}
	\tilde{r}\k_i= \begin{cases}a_{i}^\top x\k-b_i, & \text { if } i \in \mathcal{I}_k, \\ 0, & \text { otherwise},\end{cases}
\end{equation}
and to select $ i_k $	with probability $
\operatorname{Prob}\left(i_k=i\right)=\frac{\left|\tilde{r}\k_{i}\right|^2}{\|\tilde{r}\k\|_2^2}.$ In our method, it is generalized into any valid probability $ \prob(i_k = i) = p\k_i $,  as long as $ p\k_i = 0  $ when $ i \notin \Ik $. This generalization allow the users to adopt more flexible probability criteria. For instance, one may reduce the computational cost of each iteration by using simpler probabilities, such as selecting $i_k$ uniformly from $\Ik$.

Another modification arises in the threshold parameter $ \varepsilon_k$. Compared to the $ \varepsilon_k $ employed in \cite{bai2018greedy}, i.e.
\[
\varepsilon_k = \frac{1}{2}\left(\frac{1}{\nmsq{Ax\k-b}} \max_{i\in[m]} \left\{ \frac{\left|a\i\t x\k- b\i\right|^2}{\nmsq{a\i}} \right\} + \frac{1}{\nmfsq{A}}\right),
\]
the last summand in \eqref{equ:alo-eps} is changed into $ \frac{1}{\Gamma_k} $. This modification serves to provide a tighter threshold parameter $\varepsilon_k$, as $ \frac{1}{\Gamma_k}> \frac{1}{\nmfsq{A}}$ for $k\geq 1$. 

Next, we show that the GMIRK method is well defined as index set $\mathcal{I}_k$ defined in \eqref{alg-xie-1} is nonempty for $k\geq0$. Let us first establish two useful lemmas.
The first one describes that the residuals of $ x\k $ on $ H\ikm $ and $ H\ikmm $ are zero (when $ k\geq2 $).
\begin{lemma}
	Suppose that $ \seqk{x\k} $ is the sequence generated by Algorithm \ref{algo:main}.  Then for any $ k\geq 1$,
	\begin{equation}\label{xie-proof-2}
		r\k_{i_{k-1}} = \res{\ikm}{x\k} = 0, 			
	\end{equation}
	and for any $ k\geq 2$,
	\begin{equation}\label{xie-proof-2p}
		r\k_{i_{k-2}} = \res{\ikmm}{x\k} = 0.
	\end{equation}
\end{lemma}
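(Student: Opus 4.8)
The plan is to verify \eqref{xie-proof-2} and \eqref{xie-proof-2p} directly from the update rules in Steps 4--6 of Algorithm \ref{algo:main}, using only the explicit formula for $x\kp$ in terms of $w\k$ and the definition of $\beta_k$ and $w\k$. The key observation is that $x\kp$ is obtained from $w\k$ by an orthogonal projection onto $H\ik$, so $\res{\ik}{x\kp}=0$ automatically; shifting the index this gives $r\k_{i_{k-1}}=0$ for $k\geq1$, which is exactly \eqref{xie-proof-2}. So the real content is \eqref{xie-proof-2p}: showing that the inertial step also kills the residual on the row used two steps earlier.

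For \eqref{xie-proof-2p}, I would compute $\res{\ikmm}{x\k}$ for $k\geq2$ by expanding $x\k = w\km - \frac{\res{\ikm}{w\km}}{\nmsq{a\ikm}}a\ikm$ and then $w\km = x\km + \bkm a\ikmm$. Taking the inner product with $a\ikmm$ and subtracting $b\ikmm$ gives
\[
\res{\ikmm}{x\k} = \res{\ikmm}{x\km} + \bkm\nmsq{a\ikmm} - \frac{\res{\ikm}{w\km}}{\nmsq{a\ikm}}\inn{a\ikmm,a\ikm}.
\]
Now $\res{\ikmm}{x\km}=0$ by \eqref{xie-proof-2} applied at step $k-1$ (valid since $k-1\geq1$), so the first term vanishes. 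It remains to check that the definition of $\bkm$ in Step 4 is precisely chosen to make the remaining two terms cancel. Substituting $\bkm = \frac{\inn{a\ikm,a\ikmm}(\res{\ikm}{x\km})}{\nmsq{a\ikm}\nmsq{a\ikmm} - \inn{a\ikm,a\ikmm}^2}$ and using $\res{\ikm}{w\km} = \res{\ikm}{x\km} + \bkm\inn{a\ikmm,a\ikm}$, a short algebraic simplification should show the two terms are equal and opposite; here the denominator $\nmsq{a\ikm}\nmsq{a\ikmm}-\inn{a\ikm,a\ikmm}^2$ is nonzero by the standing assumption that distinct rows are linearly independent (Cauchy--Schwarz is strict).

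The main obstacle is purely bookkeeping: keeping the triple-indexed quantities $i_{k-2},i_{k-1},i_k$ straight and correctly invoking the induction, i.e. that \eqref{xie-proof-2} at level $k-1$ (which gives $\res{\ikmm}{x\km}=0$) is available when proving \eqref{xie-proof-2p} at level $k$. One subtlety to flag: when $k=2$, the term $\res{\ikmm}{x\km}=\res{i_0}{x\ol}$ must still vanish, which it does because \eqref{xie-proof-2} holds for all $k\geq1$ including $k=1$ (note $x\ol = P_{i_0}(x\zo)$ since $\beta_0=0$ and $w\zo=x\zo$). No genuinely hard estimate is needed; the lemma is essentially the statement that the coefficient $\bkm$ was reverse-engineered to enforce \eqref{xie-proof-2p}, so the proof is a verification that the definition does what it claims.
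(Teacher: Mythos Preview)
Your proposal is correct and follows essentially the same route as the paper: the paper also proves \eqref{xie-proof-2} directly from the projection step $x\k = w\km - \fres{\ikm}{w\km}a\ikm$, and then for \eqref{xie-proof-2p} expands $x\k$ and $w\km = x\km + \bkm a\ikmm$, uses \eqref{xie-proof-2} at level $k-1$ to kill $\res{\ikmm}{x\km}$, and checks that the definition of $\bkm$ makes the remaining two terms cancel. The only cosmetic difference is that the paper first fully expands $\res{\ikm}{w\km}$ in terms of $x\km$ before collecting terms, whereas you keep that residual intact and expand it at the end; the algebra is identical.
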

\begin{proof}
	For $ k \geq 1 $, we have
	\begin{align*}
		r\k_{i_{k-1}} = &  \res{\ikm}{x\k}\\
		=  & \innL{a\ikm, w\km - \fres{\ikm}{w\km} a\ikm } - b\ikm \\
		= &  \res{\ikm}{w\km} - \big(\res{\ikm}{w\km}\big)\\
		=&  0.
	\end{align*}
	For $ k \geq 2 $, we have
	\begin{align*}
		r\k\ikmm = &  \res{\ikmm}{x\k} \\
		=  &   \innL{a\ikmm, w\km - \fres{\ikm}{w\km} a\ikm } - b\ikmm \\
		= &   \innL{a\ikmm, (x\km + \bkm a\ikmm) - \fres{\ikm}{x\km + \bkm a\ikmm} a\ikm } - b\ikmm \\		
		= &  \res{\ikmm}{x\km} + \bkm\frac{\nmsq{a\ikm}\nmsq{a\ikmm} - \inn{a\ikm,a\ikmm}^2}{\nmsq{a\ikm}} 
		\\
		& - \frac{\inn{a\ikm,a\ikmm}(\res{\ikm}{x\km})}{\nmsq{a\ikm}} \\
		= &  \res{\ikmm}{x\km}\\
		=& 0,
	\end{align*}
	where the last equality follows from \eqref{xie-proof-2}.	
\end{proof}
\begin{lemma}\label{lemma2}
	Suppose that $ \Gamma_0 = {\nmfsq{A}} $, $ \Gamma_1 = {\gamma_1} $ and for $k\geq2$, $ \Gamma_k = {\gamma_2}$, where $\gamma_1$ and $\gamma_2$ are defined as \eqref{def-gamma}.
	Let $ \seqk{x\k} $ be the sequence generated by Algorithm \ref{algo:main}. Then for any $ k \geq 0 $,
	\[
	\max_{i\in[m]} \left\{  \frac{\left|\res{\i}{x\k}\right|^2}{\nmsq{a\i}} \right\}	 \geq \frac{1}{\Gamma_k}\nmsq{Ax\k-b}.
	\]
\end{lemma}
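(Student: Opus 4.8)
The statement is a lower bound on the largest normalized residual component in terms of the full residual norm. For $k=0$ the claim is classical: since $\max_{i}\frac{|\res{i}{x\zo}|^2}{\nmsq{a_i}} \cdot \nmfsq{A} \ge \sum_{i} \frac{|\res{i}{x\zo}|^2}{\nmsq{a_i}}\nmsq{a_i} = \nmsq{Ax\zo-b}$, dividing by $\nmfsq{A} = \Gamma_0$ gives the bound. So the real content is $k\ge 1$, where we must exploit the structural fact from the previous lemma that certain residual components of $x\k$ vanish. The plan is to run the same averaging argument but over a \emph{restricted} index set that omits the indices whose residual is known to be zero, so that the denominator $\nmfsq{A}$ shrinks to $\gamma_1$ (for $k=1$) or $\gamma_2$ (for $k\ge 2$).

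First I would fix $k\ge 1$ and recall from \eqref{xie-proof-2} that $r\k_{i_{k-1}} = 0$, and from \eqref{xie-proof-2p} that additionally $r\k_{i_{k-2}} = 0$ when $k\ge 2$. Then I would write, for any subset $S \subseteq [m]$,
\[
\nmsq{Ax\k-b} = \sum_{i\in[m]} |\res{i}{x\k}|^2 = \sum_{i\in S} |\res{i}{x\k}|^2 = \sum_{i\in S} \frac{|\res{i}{x\k}|^2}{\nmsq{a_i}}\,\nmsq{a_i} \le \left(\max_{i\in[m]} \frac{|\res{i}{x\k}|^2}{\nmsq{a_i}}\right) \sum_{i\in S} \nmsq{a_i},
\]
where the second equality is valid precisely when $S \supseteq \{\, i : \res{i}{x\k} \ne 0 \,\}$, i.e.\ when the complement of $S$ consists only of indices with zero residual. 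For $k=1$, take $S = [m]\setminus\{i_0\}$; then $\sum_{i\in S}\nmsq{a_i} \le \gamma_1 = \Gamma_1$ by the definition \eqref{def-gamma} of $\gamma_1$ (which is the max over $i$ of exactly such a sum). For $k\ge 2$, take $S = [m]\setminus\{i_{k-1}, i_{k-2}\}$ — here I should note $i_{k-1}\ne i_{k-2}$, which holds because consecutive selected indices are distinct (if $i_{k-1}=i_{k-2}$ then $\beta_{k-1}$ and the update would force $\res{i_{k-1}}{x\k}=\res{i_{k-1}}{x\km}$, but by \eqref{xie-proof-2} applied at step $k-1$ the latter is zero, contradicting that the method has not converged) — so $\sum_{i\in S}\nmsq{a_i} \le \gamma_2 = \Gamma_2 = \Gamma_k$ by the definition of $\gamma_2$. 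Rearranging the displayed inequality gives $\max_{i\in[m]} \frac{|\res{i}{x\k}|^2}{\nmsq{a_i}} \ge \frac{1}{\Gamma_k}\nmsq{Ax\k-b}$ in all cases.

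The main obstacle, such as it is, is bookkeeping rather than depth: one must be careful that when $Ax\k - b = 0$ the inequality is trivial (both sides zero), and otherwise the residual is nonzero so the algorithm has not terminated and the index $i_k$ at step $k-1$ satisfied $\res{i_{k-1}}{x\km} \ne 0$ before the update — this is what legitimizes the claim $i_{k-1}\ne i_{k-2}$ and, more importantly, ensures the denominators $\nmsq{a_{i_{k-1}}}\nmsq{a_{i_{k-2}}} - \inn{a_{i_{k-1}},a_{i_{k-2}}}^2$ appearing implicitly are nonzero (guaranteed by the standing linear-independence assumption on distinct rows). One also uses that $\gamma_1, \gamma_2$ are genuine upper bounds for the relevant sums over \emph{any} single omitted index or any pair of distinct omitted indices, which is immediate from \eqref{def-gamma}. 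No further estimates are needed.
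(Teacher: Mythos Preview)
Your proposal is correct and follows essentially the same approach as the paper: for each case $k=0$, $k=1$, $k\ge 2$ you bound the full residual by the maximum normalized residual times the sum of $\nmsq{a_i}$ over the appropriate index set, using \eqref{xie-proof-2} and \eqref{xie-proof-2p} to drop the indices with vanishing residual, and then invoke the definitions in \eqref{def-gamma}. You are in fact slightly more careful than the paper in explicitly justifying $i_{k-1}\neq i_{k-2}$ (the cleanest argument being that $r^{(k-1)}_{i_{k-2}}=0$ by \eqref{xie-proof-2}, so $i_{k-2}\notin\mathcal{I}_{k-1}$ unless the method has already converged), a point the paper uses implicitly.
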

\begin{proof}
	When $ k = 0, $
	\[
	\max_{i\in[m]} \left\{  \frac{\left|\res{\i}{x\k}\right|^2}{\nmsq{a\i}} \right\}	 \geq \sum_{i=1}^m \frac{\nmsq{a\i}}{\nmfsq{A}} \frac{\left|\res{\i}{x\k}\right|^2}{\nmsq{a\i}} = \frac{1}{\nmfsq{A }}\nmsq{Ax\k-b}.
	\]
	This is because the largest term in $ \bigg\{ \frac{\left|\res{\i}{x\k}\right|^2}{\nmsq{a\i}} \bigg\}_{i=1}^m$ is not less than any convex combination of all the terms. Similarly, when $ k = 1, $
	\begin{align*}\label{xie-proof-0729-1}
		\max_{i\in[m]} \left\{  \frac{\left|\res{\i}{x\k}\right|^2}{\nmsq{a\i}} \right\}	 \geq & \sum\limits_{\substack{i=1, \\ i\neq i_{k-1} }}^{m} 	\frac{\nmsq{a\i}}{\nmfsq{A}-\nmsq{a\ikm }} \frac{\left|\res{\i}{x\k}\right|^2}{\nmsq{a\i}} \\ \geq & \  \frac{1}{\gamma_1} \sum\limits_{\substack{i=1, \\ i\neq i_{k-1} }}^{m} \left|\res{\i}{x\k}\right|^2
		\\= & \ \frac{1}{\gamma_1} \sum\limits_{i=1}^{m} \left|\res{\i}{x\k}\right|^2  \\
		= & \ \frac{1}{\gamma_1} \nmsq{Ax\k-b},
	\end{align*}
	where the first equality follows from \eqref{xie-proof-2}. When $ k\geq2, $
	\begin{align*}
		\max_{i\in[m]} \left\{  \frac{\left|\res{\i}{x\k}\right|^2}{\nmsq{a\i}} \right\}	 \geq & 	\sum\limits_{\substack{i=1,  \\ i\neq i_{k-1}, i_{k-2} }}^{m} 	\frac{\nmsq{a\i}}{\nmfsq{A}-\nmsq{a\ikm }-\nmsq{a\ikmm}} \frac{\left|\res{\i}{x\k}\right|^2}{\nmsq{a\i}} \\
		\geq & \ \frac{1}{\gamma_2} \sum\limits_{\substack{i=1, \\ i\neq i_{k-1},i_{k-2} }}^{m} 	\left|\res{\i}{x\k}\right|^2
		\\=& \ \frac{1}{\gamma_2} \sum\limits_{i=1}^{m} \left|\res{\i}{x\k}\right|^2  \\
		= & \ \frac{1}{\gamma_2} \nmsq{Ax\k-b} ,
	\end{align*}
	where the first equality follows from \eqref{xie-proof-2} and \eqref{xie-proof-2p}.
\end{proof}
By the definition of $ \Ik $ in \eqref{alg-xie-1}, we know that $ i \in \Ik $ if and only if
\[
\frac{|\res{\i}{x\k}|}{\nmsq{a\i}} \geq \hf \left(  \max_{i\in[m]} \left\{  \frac{\left|\res{\i}{x\k}\right|^2}{\nmsq{a\i}} \right\} + \frac{1}{\Gamma_k}\nmsq{Ax\k-b}  \right).
\]
Lemma \ref{lemma2} shows that at least $ i_k^{\max}$ belongs to $ \Ik $, where $ i_k^{\max} \in \arg\max\limits_{i\in[m]} \bigg\{  \frac{\left|\res{\i}{x\k}\right|^2}{\nmsq{a\i}} \bigg\} $. Thus $ \Ik $ is nonempty and  the GMIRK method is well defined.

\subsection{Convergence analysis}

We have the following convergence result for Algorithm \ref{algo:main}.

\begin{theorem}\label{theo:basic}
	Suppose that $ x\zo \in \Rn $ and let $ x\s = A^\dagger b + (I-A^\dagger A)x\zo $ denote the projection of $ x\zo $ onto the solution set of $ Ax=b$. Then the iteration sequence $ \seqk{x\k} $ generated by Algorithm \ref{algo:main} satisfies
	$$
	\nmsq{x^{(1)}-x\s} \leq  \rho_0 \nmsq{x\zo-x\s}
	$$
	and
	$$
	\nmsq{x\k-x\s} \leq \rho_2^{k-2}\rho_1 \rho_0 \nmsq{x\zo-x\s}, \  k=2,3,\ldots,
	$$
	where
	\[
	\rho_0 =  1-\frac{\simi^2(A)}{\nmfsq{A}},\ \ \rho_1= 1- \frac{\simi^2(A)}{(1-\delta_{\min}^2)\gamma_1} , \ \ \text{and} \ \ \rho_2 =  1- \frac{\simi^2(A)}{(1-\delta_{\min}^2)\gamma_2}  ,
	\]
	and $ \gamma_1$, $\gamma_2$, and $\delta_{\min}$ are defined as \eqref{def-gamma} and \eqref{def-delta}, respectively.
\end{theorem}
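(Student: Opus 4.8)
The plan is to track the quantity $\nmsq{x\k - x\s}$ across one iteration and show it contracts by the appropriate factor $\rho_0$, $\rho_1$, or $\rho_2$, depending on whether $k=0$, $k=1$, or $k\geq 2$. The starting observation is that, by Step 6 of Algorithm~\ref{algo:main}, $x\kp = P_{i_k}(w\k)$, and moreover $x\kp$ lies on the intersection of the hyperplanes $H\ik$ and $H\ikm$ (for $k\geq 1$), by the first Lemma which shows $\res{\ikm}{x\kp}=0$. Because $x\s$ satisfies $Ax\s=b$, it lies on every hyperplane $H_i$; hence, by the Pythagorean identity for orthogonal projections, one obtains
\[
\nmsq{x\kp - x\s} = \nmsq{x\k - x\s} - \nmsq{x\kp - x\k}
\]
when $k=0$, and an analogous identity involving the projection onto the intersection of two hyperplanes for $k\geq 1$. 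The key point is to lower-bound the "progress term" $\nmsq{x\kp - x\k}$ (or its two-hyperplane analogue) from below.

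First I would handle $k=0$: here the iteration coincides with the GRK step, so $\nmsq{x\kp - x\k} = \frac{|\res{\ik}{x\zo}|^2}{\nmsq{a\ik}}$, and using $i_0\in\Ik$ together with $\varepsilon_0 \geq \frac{1}{\nmfsq A}$ (from the definition of $\varepsilon_k$ and Lemma~\ref{lemma2} with $\Gamma_0=\nmfsq A$) gives $\frac{|\res{\ik}{x\zo}|^2}{\nmsq{a\ik}} \geq \frac{1}{\nmfsq A}\nmsq{Ax\zo - b} \geq \frac{\simi^2(A)}{\nmfsq A}\nmsq{x\zo - x\s}$, where the last step uses $\nmsq{Ax\zo-b} = \nmsq{A(x\zo-x\s)} \geq \simi^2(A)\nmsq{x\zo-x\s}$ (valid since $x\zo - x\s \perp \ker A$). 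This yields $\rho_0$. For $k\geq 1$, I would compute $\nmsq{x\kp - x\k}$ for the oblique/two-step update. Using that $x\kp$ is the orthogonal projection of $x\k$ onto $H\ik \cap H\ikm$ and that $\res{\ikm}{x\k}=0$ already (so $x\k \in H\ikm$), the squared progress equals $\frac{|\res{\ik}{x\k}|^2}{\nmsq{a\ik}(1-\cos^2\phi_k)}$ where $\cos\phi_k = \frac{\inn{a\ik,a\ikm}}{\nm{a\ik}\nm{a\ikm}}$; since $\cos^2\phi_k \leq \delta^2 \cdot (\text{wait—actually } \geq)$... more carefully, $1-\cos^2\phi_k \leq 1-\delta^2$ is false in general, so one bounds $\frac{1}{1-\cos^2\phi_k} \geq \frac{1}{1-\delta^2}$ only if $\cos^2\phi_k \geq \delta^2$; but the correct move is $\frac{1}{1-\cos^2\phi_k}\geq 1$ is too weak, whereas the definition of $\delta$ as a \emph{minimum} over all pairs gives $\cos^2\phi_k \geq \delta^2$, hence $\nmsq{x\kp-x\k} \geq \frac{|\res{\ik}{x\k}|^2}{\nmsq{a\ik}(1-\delta^2)}$.

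Next, I would invoke $i_k \in \Ik$: this gives $\frac{|\res{\ik}{x\k}|^2}{\nmsq{a\ik}} \geq \varepsilon_k \nmsq{Ax\k-b} \geq \frac{1}{\Gamma_k}\nmsq{Ax\k-b}$, where the last inequality holds because $\varepsilon_k \geq \frac{1}{\Gamma_k}$ by Lemma~\ref{lemma2} and the definition \eqref{equ:alo-eps}. Combining with $\nmsq{Ax\k-b}\geq \simi^2(A)\nmsq{x\k-x\s}$ (again using $x\k - x\s \perp \ker A$, which must be verified by induction since each update changes $x\k$ only along row directions, i.e. within $\operatorname{Range}(A\t)$) yields $\nmsq{x\kp-x\k}\geq \frac{\simi^2(A)}{(1-\delta^2)\Gamma_k}\nmsq{x\k-x\s}$, and therefore $\nmsq{x\kp-x\s}\leq \big(1-\frac{\simi^2(A)}{(1-\delta^2)\Gamma_k}\big)\nmsq{x\k-x\s}$. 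Since $\Gamma_1=\gamma_1$ and $\Gamma_k=\gamma_2$ for $k\geq 2$, chaining these one-step bounds produces the claimed telescoped estimate with factors $\rho_0$, $\rho_1$, $\rho_2$.

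The main obstacle I anticipate is the careful justification of the two-hyperplane Pythagorean step for $k\geq 1$: one must confirm that $x\kp$ is genuinely the orthogonal projection of $x\k$ onto $H\ik\cap H\ikm$ (equivalently, that $w\k$ was chosen so that $x\kp - x\k \in \operatorname{span}\{a\ik,a\ikm\}$ and $x\kp\in H\ik\cap H\ikm$), and then derive the exact expression for $\nmsq{x\kp-x\k}$ in terms of the residual $\res{\ik}{x\k}$ and the angle between the two rows — this is the place where the factor $\frac{1}{1-\delta^2}$ and the quantity $\gamma_i$ both enter, and getting the algebra of the $2\times 2$ Gram matrix inverse right (it already appears in the formula for $\bk$) is the crux. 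A secondary but essential bookkeeping point is the invariant $x\k - x\s \in \operatorname{Range}(A\t)$ for all $k$, needed for every application of $\simi^2(A)$; this follows by induction since $w\k$ and $x\kp$ differ from $x\k$ only by multiples of rows $a\ik, a\ikm$, and $x\zo - x\s = x\zo - A^\dagger b - (I-A^\dagger A)x\zo = A^\dagger(Ax\zo - b)\in\operatorname{Range}(A\t)$.
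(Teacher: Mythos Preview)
Your proposal is correct and follows essentially the same approach as the paper. The only stylistic difference is that the paper carries out a direct algebraic expansion of $\nmsq{x\kp-x\s}$ from the update formula (substituting the explicit $\beta_k$ into the expanded square) to arrive at the identity $\nmsq{x\kp-x\s}=\nmsq{x\k-x\s}-\frac{1}{1-\cos^2\phi_k}\frac{|\res{\ik}{x\k}|^2}{\nmsq{a\ik}}$, whereas you obtain the same identity via the geometric characterization of $x\kp$ as the orthogonal projection of $x\k$ onto $H\ik\cap H\ikm$ together with Pythagoras; from that point on (the bound via $\delta$, the use of $i_k\in\Ik$ and $\varepsilon_k\geq 1/\Gamma_k$, the range invariant for $\sigma_{\min}$, and the telescoping) the arguments coincide.
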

\begin{proof}
	By the iterative strategy of Algorithm \ref{algo:main}, for $k\geq1$, we have
	\begin{align}
		\err{x\kp}= & \ \nmsqL{w\k -x\s -\fres{\ik}{w\k}a\ik} \nonumber \\
		= & \ \err{w\k} - \frac{\left|\res{\ik}{w\k}\right|^2}{\nmsq{a\ik}}  \nonumber \\
		= & \ \nmsq{x\k-x\s + \bk a\ikm} -  \frac{\left|\res{\ik}{x\k} + \bk\inn{a\ik,a\ikm}\right|^2 }{\nmsq{a\ik}} \nonumber \\
		\overset{\textcircled{a}}{=}& \ \err{x\k} + \bk^2 \nmsq{a\ikm} - \frac{\left|\res{\ik}{x\k}\right|^2}{\nmsq{a\ik}} - \bk^2 \frac{\inn{a\ik,a\ikm}^2}{\nmsq{a\ik}} 
		\nonumber \\ & -  2\bk \frac{\inn{a\ik,a\ikm}(\res{\ik}{x\k})}{\nmsq{a\ik}} \nonumber \\
		= & \ \err{x\k} - \frac{\left|\res{\ik}{x\k}\right|^2}{\nmsq{a\ik}} 
		+ \bk^2 \ \frac{\nmsq{a\ik}\nmsq{a\ikm} - \inn{a\ik,a\ikm}^2}{\nmsq{a\ik}}
		\nonumber \\ &
		- 2\bk \frac{ \inn{a\ik,a\ikm}(\res{\ik}{x\k}) }{\nmsq{a\ik}}, \label{xie-proof-1} 
	\end{align}
	where $\textcircled{a}$ follows from the fact that $\langle a\ikm,x\k-x\s\rangle= \res{\ikm}{x\k} =r\k_{i_{k-1}}=0$.
	Then by the choices of $\beta_k$, for $ k\geq 1 $ we have
	\begin{align}
		\err{x\kp}= & \ \err{x\k} - \frac{\left|\res{\ik}{x\k}\right|^2}{\nmsq{a\ik}}  \nonumber \\ & 
		- \frac{\inn{a\ik,a\ikm}^2}{\nmsq{a\ik}\nmsq{a\ikm} - \inn{a\ik,a\ikm}^2} \frac{\left|\res{\ik}{x\k}\right|^2}{\nmsq{a\ik}} \nonumber \\
		= & \ \err{x\k} - \frac{\nmsq{a\ik}\nmsq{a\ikm}}{\nmsq{a\ik}\nmsq{a\ikm} - \inn{a\ik,a\ikm}^2} \frac{\left|\res{\ik}{x\k}\right|^2}{\nmsq{a\ik}} \nonumber \\
		= & \ \err{x\k} - \frac{1}{1 - \left\langle\frac{a\ik}{\nm{a\ik}},\frac{a\ikm}{\nm{a\ikm}}\right\rangle^2} \frac{\left|\res{\ik}{x\k}\right|^2}{\nmsq{a\ik}} \nonumber \\				
		\overset{\textcircled{b}}{\leq} & \ \err{x\k} - \frac{1}{1 - \delta_{\min}^2} \frac{\left|\res{\ik}{x\k}\right|^2}{\nmsq{a\ik}} \nonumber \\
		\overset{\textcircled{c}}{\leq} &  \nmsq{x\k-x\s} - \frac{\varepsilon_k}{1 - \delta_{\min}^2} \nmsq{Ax\k-b}, \label{xie-proof-3}
	\end{align}
	where $\textcircled{b}$ follows from the definition of the $\delta_{\min}$ and $\textcircled{c}$ follows from \eqref{alg-xie-1}.
	For the case $k=0$, by using the similar argument we can get
	\begin{equation}\label{xie-proof-4}
		\|x^{(1)}-x\s\|^2_2\leq\|x^{(0)}-x\s\|^2_2- \varepsilon_0\nmsq{Ax^{(0)}-b}.
	\end{equation}
	Next, let us give an estimate for the quantity $\varepsilon_k$. In fact, we have
	\[
	\varepsilon_k =  \frac{1}{2\|Ax\k-b\|^2_2}  \MAXTERM+\frac{1}{2\Gamma_k}
	\geq  \frac{1}{2\Gamma_k} + \frac{1}{2\Gamma_k}  = \frac{1}{\Gamma_k},
	\]
	where the inequality follows from Lemma \ref{lemma2}. Hence, by the definition of $ \Gamma_k $, we have
	\begin{equation}\label{equ:basicprof1}
		\varepsilon_k \geq \left\{ \begin{array}{ll}
			\frac{1}{\nmfsq{A}}, & k = 0, \\
			\dfrac{1}{\gamma_1},  & k=1, \\
			\dfrac{1}{\gamma_2},  & k\geq2.
		\end{array}\right.
	\end{equation}	
	Now, let us give an estimate for $\nmsq{Ax\k -b}$. We have that for any $k\geq0$, $ x\k - x\s \in \text{Range}(A^\top) $. Indeed, from the definition of $ x\s $, we know that $ x\zo - x\s = A^\dagger(Ax\zo - b) \in \text{Range}(A^\top) $. Suppose that $ x\k - x\s \in \text{Range}(A^\top) $ holds, then $w^{(k)}- x\s=x\k- x\s+\beta_k a_{i_{k-1}}\in \text{Range}(A^\top)$. Hence
	$ x\kp - x\s = w\k -x\s -  \frac{a\ik\t w\k - b\ik}{\nmsq{a\ik}}a\ik \in \text{Range}(A^\top) $. By induction we have that  $ x\k - x\s \in \text{Range}(A^\top) $ holds for any $ k\geq 0. $ Therefore,
	$$
	\nmsq{Ax\k -b} = \nmsq{A(x\k-x\s)} \geq \simi^2(A) \nmsq{x\k-x\s}.
	$$
	Substituting this and \eqref{equ:basicprof1} into \eqref{xie-proof-3} and \eqref{xie-proof-4} completes the proof.
\end{proof}

\begin{remark}\label{remark-xie-2}
	It can be observed from \eqref{xie-proof-1} that the parameter $\beta_k$ in Algorithm \ref{algo:main} is carefully chosen such that  $\|x\kp-x\s\|^2_2$ is minimized.
\end{remark}

\begin{remark}\label{xie-remark-1}
	We here compare our convergence result with those of GRK and MIRK respectively. In Theorem 2.1 of \cite{su2023linear}, the authors showed that the  iteration sequence $\{x\k\}_{k\geq0}$ of the GRK method satisfies
	$$		\nmsq{x\k-x\s} \leq \bigg(  1-\hf\bigg( \frac{\nmfsq{A}}{\gamma_1}+1 \bigg) \frac{\simi^2(A)}{\nmfsq{A}} \bigg)^{k-1} \bigg(1-\frac{\simi^2(A)}{\nmfsq{A}}\bigg) \nmsq{x\zo-x\s}, \ k\geq1.
	$$
	It is obvious that $\rho_2<\rho_1 <\left(  1-\hf\left( \frac{\nmfsq{A}}{\gamma_1}+1 \right) \frac{\simi^2(A)}{\nmfsq{A}} \right)$, which means that the convergence factor of Algorithm \ref{algo:main} in Theorem \ref{theo:basic} is smaller than that of the GRK method.
	In Theorem 4.1 of \cite{he2023inertial}, the authors showed that the  iteration sequence $\{x\k\}_{k\geq0}$ of the MIRK method satisfies
	$$		
	\E [ \nmsq{x\k-x\s} ] \leq \rho_1^{k-1} \rho_0 \nmsq{x\zo-x\s}, \ k\geq1.
	$$
	Since $ \rho_2 < \rho_1$, we have that Algorithm \ref{algo:main} achieves a better convergence factor compared to the MIRK method. Furthermore, 
	our convergence result is for the quantity of $\nmsq{x\k -x\s}$ and we refer to such convergence for randomized algorithms as deterministic.
\end{remark}

\subsection{A geometric interpretation} In this subsection, we give several views of geometric interpretation of the multi-step inertial  extrapolation approach.
For any $i\in[m]$, we define the hyperplane
$$ {H\i = \{ x \mid \inn{a\i,x} = b\i\},} $$
and let
$$\Pi_k=x\k+\text{span}\{a_{i_k},a_{i_{k-1}}\}.$$

The procedure of the multi-step inertial  extrapolation approach consists of two steps.
Starting from $ x\k $, it first determines a specific point $ w\k $ along the direction $ a\ikm $, i.e. $ w\k = x\k +\bk a\ikm $. Then, it projects $ w\k $ onto the hyperplane $ H\ik $ to obtain $ x\kp $.
From \eqref{xie-proof-2} and \eqref{xie-proof-2p}, we know that the coefficient $ \bk $ is designed such that $ x\kp $ locates on the intersection of the two hyperplanes $ H\ik $ and $ H\ikm $. We present the geometric
interpretation of the multi-step inertial  extrapolation approach  in Figure \ref{fig:ink}.
\begin{figure}
	\centering
	\begin{tikzpicture}
		\draw (-1,0) -- (4.5,0);
		\draw (-0.3,-0.4) -- (2,2.667);
		\draw[dotted] (-0.3,3.350) -- (2.5,1.25);
		\draw[dashed, -stealth] (0,3.125) -- (0,0+0.05);
		\filldraw (1.5,2) circle [radius=1pt];
		\filldraw (0,3.125) circle [radius=1pt];
		\filldraw (0,0) circle [radius=1pt];
		
		\draw (1.5+0.5,2+0.15) node {$ x\k $};
		\draw (0-0.4,3.125-0.2) node {$ w\k $};
		\draw (0+0.5,0-0.3) node {$ x\kp $};
		\draw (4.5-0.8,0+0.3) node {$ H\ik = \{ x \mid \inn{a\ik,x} = b\ik \}  $};
		\draw (2+1.3,2.667+0.3) node {$ H\ikm =\{ x \mid \inn{a\ikm,x} = b\ikm \} $};
	\end{tikzpicture}
	\caption{ A geometric interpretation of the multi-step inertial extrapolation approach. The next iterate $x\kp$ is  the projection of $w\k=x\k+\beta_k a_{i_{k-1}}$ onto the hyperplane $H_{i_k}$ with $ \bk $ being chosen such that $ x\kp $ locates on the intersection of the two hyperplanes $ H\ik $ and $ H\ikm $.}
	\label{fig:ink}
\end{figure}
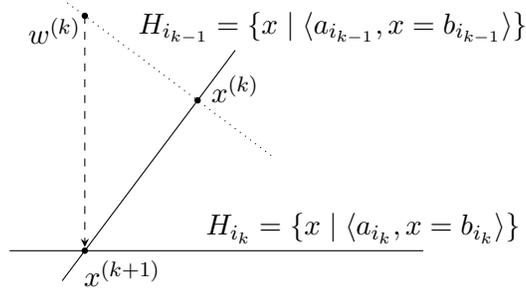
Meanwhile, we know that
$$
x\kp=w\k-\frac{\langle a_{i_k},w^k\rangle-b_{i_k}}{\|a_{i_k}\|^2_2}a_{i_k}=x\k+\beta_k a_{i_{k-1}}-\frac{\langle a_{i_k},w^k\rangle-b_{i_k}}{\|a_{i_k}\|^2_2}a_{i_k}\in \Pi_k.
$$
Therefore, $ x\kp \in H_{i_k} \cap H_{i_{k-1}}\cap \Pi_k$. Let
$$V_1 = \operatorname{span} \{ a\ik, a\ikm \} \  \text{and} \  V_2 = \left\{ x \ \left| \
\begin{bmatrix} a\ikm^\top \\a\ik^\top \end{bmatrix} x = 0 \right.\right\}. $$
It is apparent that those two subspaces are orthogonal and hence $ \{0\}=V_1 \cap V_2$. Since $V_1$ is parallel to $\Pi_k$ and $V_2$ is parallel to $H_{i_k} \cap H_{i_{k-1}}$ which implies that the intersection $H_{i_k} \cap H_{i_{k-1}}\cap \Pi_k$ is a singleton. Overall, we obtain that
\begin{equation}\label{xie-optimization0}
	\boxed{ \{x\kp\} = (H\ik \cap H\ikm) \ \bigcap \ \Pi_k.}
\end{equation}

This geometric interpretation can shed light on other perspectives. First, since $ x\kp $ is on $ \Pi_k $ and $ x\s - x\kp\in H\ik \cap H\ikm$ is perpendicular to $ \Pi_k $, i.e.
\[
\inn{x\s-x\kp,y-x\kp}=0, \quad \forall \ y \in \Pi_k,
\]
then $ x\kp $ is the projection of $ x\s $ onto $ \Pi_k, $ i.e.
\begin{equation}\label{xie-optimization}
	\boxed{x\kp =\arg\min  \|x-x\s\|^2_2 \ \ \text{subject to} \ \ x = x\k + \tau_1a\ik+\tau_2 a\ikm, \tau_1,\tau_2\in\mathbb{R}.}
\end{equation}
Simultaneously, $ x\kp $ is on $ H\ik \cap H\ikm $ and  $ x\k - x\kp \in V_1$ is perpendicular to $ H\ik \cap H\ikm $,  i.e.
\[
\inn{x\k-x\kp,y-x\kp}=0, \quad \forall \ y \in H\ik \cap H\ikm.
\]
Thus $ x\kp $ is also the projection of $ x\k $ onto $ H\ik \cap H\ikm$, i.e.
\begin{equation}\label{xie-optimization2}
	\boxed{x\kp =\arg\min  \|x-x\k\|^2_2 \ \ \text{subject to} \ \ \inn{a\ikm,x}=b\ikm, \ \inn{a\ik,x} = b\ik.}
\end{equation}
These three viewpoints are illustrated in Figure \ref{fig:soo}. Recall that the sketch-and-project \cite{15M1025487,19M1285846} update solves
\begin{equation*}
	\boxed{x\kp =\arg\min  \|x-x\k\|^2_B \ \ \text{subject to} \ \ S\ik\t Ax = S\ik\t b.}
\end{equation*}
Here $ \|  \cdot  \|_B = \sqrt{\inn{\cdot, B\cdot}}$ with $ B $ being symmetric positive defineite, and $ S\ik \in \R^{m\times \tau} $ is the \textit{sketching matrix} with sketching size $ \tau $. It is obvious that the multi-step inertial extrapolation approach provides a implementation for the above framework with $ B$ being an identity matrix and $ S_{i_k} = [e\ik,e\ikm], $ where $ e_i $ is the unit coordinate vector at the $ i $-th axis. Indeed, \eqref{xie-optimization0}, \eqref{xie-optimization}, and  \eqref{xie-optimization2} corresponse to the \textit{random-intersect} viewpoint, the \textit{constrain-and-approximate} viewpoint, and the \textit{sketch-and-project} viewpoint discussed in Section 2 of \cite{15M1025487} for the sketch-and-project method, respectively.

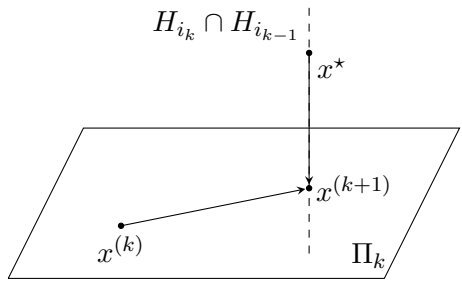
\begin{figure}[hptb]
	\centering
	\begin{tikzpicture}
		\draw (0,0)--(5,0)--(6,2)--(1,2)--(0,0);
		
		\filldraw (1.5,0.7) circle [radius=1pt]
		(4,3.0) circle [radius=1pt]
		(4,1.2) circle [radius=1pt];
		\draw (1.5,0.4) node {$x\k$};
		\draw (4.3,2.8) node {$x\s$};
		\draw (4.6,1.2) node {$x\kp$};
		\draw (4.8,0.3) node {$\Pi_k$};
		\draw (2.9,3.35) node {$H\ik \cap H\ikm$};
		\draw [dashed] (4,3.6) -- (4,0.25);
		\draw [-stealth] (4,3.0) -- (4,1.25);
		\draw [-stealth] (1.5,0.7) -- (3.95,1.19);
	\end{tikzpicture}
	\caption{The next iterate $x\kp$ is the only point in the intersection of $H\ik \cap H\ikm$ and $ \Pi_k$. It is the projection of $ x\s $ onto $ \Pi_k $ and meanwhile the projection of $ x\k $ onto $H\ik \cap H\ikm$.}	
	\label{fig:soo}
\end{figure}

	\begin{remark} \label{xie-remark-3}
		We now turn to the major advantages that projecting directly onto $H\ik \cap H\ikm$ brings about. It is well-known that the Kaczmarz iteration may suffer from slow convergence speed when the coefficient matrix $A$ is highly \textit{coherent}. A visualization of this is displayed in Figure \ref{fig:soo2}: For two coherent hyperplanes $H\ik$ and $H\ikm$, the iterates for a typical Kaczmarz method approach $x^*$ slowly, while MIRK and GMIRK are able to arrive at $x^*$ with the cost of around two Kaczmarz steps. The coherence of two rows can be expressed by the angle between their normal vectors $a\ik$ and $a\ikm$
		\[
		\cos \theta_{(a\ik,a\ikm)} = \left| \innL{\frac{a\ik}{\norm{a\ik}},\frac{a\ikm}{\norm{a\ikm}}} \right|,
		\] 
		and thus the coherence of the matrix $A$ can be bounded by 
		\[
		\delta_{\min}=\min_{ i,j\in [m],i\neq j}\left|\innL{\frac{a_i}{\nm{a_i}},\frac{a_j}{\nm{a_j}}}\right|.
		\]
		It is apparent from the definition of the coefficient $\rho_1$ and $\rho_2$ that when $\delta_{\min}$ is large, i.e., the coherence of $A$ is high, MIRK and GMIRK exhibit an accelerated convergence rate compared to the typical Kaczmarz iteration. A special case is $\delta_{\min} = 0$, where every $a_i(i\in[m])$ is orthogonal to each other. Then successive projections onto $H\i$ and $H_j$ will reach a certain point in the intersection of $H\i$ and $H_j$, which means the typical Kaczmarz iteration is also able to reach the intersection within the cost of two steps, and therefore MIRK and GMIRK show no advantage under this circumstance.
	\end{remark}

	\begin{remark} \label{xie-remark-4}
		There are also other Kaczmarz-type methods that share the same idea of ``projecting directly onto the intersection of two hyperplanes'', and they also share the same analytical frame in the acceleration for highly coherent matrices. The two-subspace Kaczmarz method \cite{needell2013two} is a renowned one of these methods. At each iteration, the method projects the iterate onto $H\ik$, and then projects the point onto an intermediate hyperplane that is orthogonal to $H\ik$. The intermediate hyperplane is generated by $H\ik$ and another chosen hyperplane $H_{j_k}$. In fact, the authors have established the equivalence of the two-subspace Kaczmarz method and AIRK in \cite{he2023inertial}, and highlighted their difference from MIRK and GMIRK. That is, although all the methods projects the current iterate onto the intersection of two hyperplanes at each iteration, the two-subspace Kaczmarz method and AIRK select the hyperplanes in individual pairs: $H_1$ and $H_2$, then $H_3$ and $H_4$, and so on, whereas MIRK and GMIRK select them in an overlapping manner: $H_1$ and $H_2$, then $H_2$ and $H_3$, and so forth.	
	\end{remark}

\begin{figure}
	\centering
	\begin{tikzpicture}
		
		\draw[] (0,0) -- (4.8, 1.2) node[anchor=west] {$H_{i_{k-1}}$};
		\draw[] (0,2) -- (4.8, 0.8) node[anchor=west] {$H_{i_{k}}$};
		
		\node at (4,1) [circle,fill,inner sep=1pt] {};
		\node at (4,1) [above] {$x^*$};
		
		\node at (0.8,1.2) [circle,fill,inner sep=1pt] {};
		\node at (0.8,1.2) [above left] {$x^{(k-1)}$};
		
		\node at (1.0353,0.2588) [circle,fill,inner sep=1pt] {};
		
		\node at (1.3841,1.6540) [circle,fill,inner sep=1pt] {};
		
		\node at (1.6918,0.4230) [circle,fill,inner sep=1pt] {};
		
		\node at (1.9634,1.5092) [circle,fill,inner sep=1pt] {};
		
		\node at (2.2030,0.5507) [circle,fill,inner sep=1pt] {};
		
		\node at (2.4144,1.3964) [circle,fill,inner sep=1pt] {};
		
		\node at (2.6010,0.6502) [circle,fill,inner sep=1pt] {};
		
		
		\draw[-stealth] (0.8,1.2) -- (1.0353-0.0107,0.2588+0.0427);
		
		\draw[-stealth] (1.0353,0.2588) -- (1.3841-0.0107,1.6540-0.0427);
		
		\draw[-stealth] (1.3841,1.6540) -- (1.6918-0.0107,0.4230+0.0427);
		
		\draw[-stealth] (1.6918,0.4230) -- (1.9634-0.0107,1.5092-0.0427);
		
		\draw[-stealth] (1.9634,1.5092) -- (2.2030-0.0107,0.5507+0.0427);
		
		\draw[-stealth] (2.2030,0.5507)  -- (2.4144-0.0107,1.3964-0.0427);
		
		\draw[-stealth] (2.4144,1.3964) -- (2.6010-0.0107,0.6502+0.0427);
		
		\draw[dashed, -stealth] (0.8,1.2) -- (4-0.05,1);
	\end{tikzpicture}
	\caption{A visualization of a typical Kaczmarz iteration that projects the iterates back-and-forth between the two hyperplanes $H\ik$ and $H\ikm$.}	
	\label{fig:soo2}
\end{figure}

We here also introduce another method that can establish connections with MIRK, the randomized Kaczmarz method with oblique projection\cite{LI2022100342,wang2022greedy}.
When projecting $ x\k $ onto the hyperplane $ H\ik $, the oblique projection does not use the orthogonal projection direction $ a\ik $. Instead, it uses a perturbed direction
$$ d\k = a\ik - \frac{\inn{a\ik,a\ikm}}{\nmsq{a\ikm}} a\ikm$$
derived from $ a\ik $ and it is orthogonal to $ a\ikm $.
Then setting $ x\kp = x\k+\eta_k d\k $ with $ \eta_k $ being designed to ensure that $ x\kp $ is on $ H\ik $. By the definition of $ d\k $, we know that $ x\kp $ is also on the $ H\ikm $. Besides, the next iterate $x\kp$ satisfies
$$
x\kp=x\k+\eta_k d\k=x\k+\eta_k a\ik - \eta_k\frac{\inn{a\ik,a\ikm}}{\nmsq{a\ikm}} a\ikm\in \Pi_k.
$$
Thus $ x\kp \in  H_{i_k} \cap H_{i_{k-1}}\cap \Pi_k $, which implies it is the same point that the multi-step inertial approach obtains.
The procedure is illustrated in Figure \ref{fig:obp}.

We note that Theorem 3.1 of \cite{wang2022greedy}, the authors had already investigated the greedy randomized Kaczmarz method with oblique projection. However, compared to Algorithm 2 in \cite{wang2022greedy}, we replace the term $\nmfsq{A}$ in the expression of $\varepsilon_k$  with $\Gamma_k$, which leads to a tighter convergence bound in Theorem \ref{theo:basic} than that in Theorem 3.1 of \cite{wang2022greedy}.
Besides, the convergence result in Theorem 3.1  of \cite{wang2022greedy} is  for the expectation $ \E [\nmsq{x\k -x\s} ]$, while the linear convergence in Theorem \ref{theo:basic} is deterministic. 

\begin{figure}
	\centering
	\begin{tikzpicture}
		\draw (-1,0) -- (4.5,0);
		\draw (-0.3,-0.4) -- (2,2.667);
		\draw[dashed] (1.5,0.5) -- (1.5,0);
		\draw[ -stealth] (1.5,2) -- (1.5,0.5+0.025);
		\draw[-stealth]  (1.5,2) -- (0.78,1.04);
		\draw[-stealth]  (0.78,1.04) -- (1.5-0.02,0.5+0.015);
		\filldraw (1.5,2) circle [radius=1pt];
		\filldraw (0,0) circle [radius=1pt];
		
		\draw (1.5+0.3,1.25) node {$ a\ik $};
		\draw (1.14-0.15,0.75-0.18) node {$ a\ikm $};
		\draw (1.14-2.1,1.5+0.2) node {$ a\ik - \frac{ \inn{a\ik,a\ikm}}{\nmsq{a\ikm}} a\ikm = d\k $};
		\draw (1.5+0.45,2) node {$ x\k $};
		\draw (0+0.5,0-0.3) node {$ x\kp $};
		\draw (4.5-0.8,0+0.3) node {$ H\ik = \{ x \mid \inn{a\ik,x} = b\ik \}  $};
		\draw (2+1.3,2.667+0.3) node {$ H\ikm =\{ x \mid \inn{a\ikm,x} = b\ikm \} $};
	\end{tikzpicture}
	\caption{A geometric explanation of the oblique projection technique. The next iterate $x\kp$ is  the projection of $x\k$ along the direction $d\k$ onto the hyperplane $H_{i_k}$.}
	\label{fig:obp}
\end{figure}
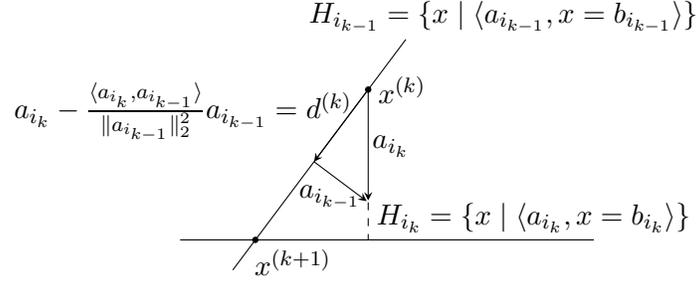

	\section{Numerical Experiments}

In this section, we describe some numerical results for the GMIRK method for solving linear systems. We also compare the GMIRK  method with the GRK method and the MIRK method on a variety of test problems.
All methods are implemented in  {\sc Matlab} R2022a for Windows $10$ on a desktop PC with the  Intel(R) Core(TM) i7-10710U CPU @ 1.10GHz  and 16 GB memory.

In our implementations, to ensure the consistency of the linear system, we set $b=Ax$, where $x$ is a vector with entries generated from a standard normal distribution. All computations are started from the initial vector $x^{(0)}=0$.  We stop the algorithms if the relative solution error (RSE)
$\frac{\|x\k-A^\dagger b\|^2_2}{\|A^\dagger b\|^2_2}\leq10^{-12}$. 
In order to compute $\gamma_1$ and $\gamma_2$, we first find the index $i_{\min}\in\arg\min\limits_{i\in[m]}\|a_i\|^2_2$. Then, we calculate $\gamma_1=\|A\|^2_F-\|a_{i_{\min}}\|^2_2$ and $\gamma_2=\gamma_1-\min\limits_{i\in[m],i\neq i_{\min}}\|a_i\|^2_2$.
	Furthermore, if it is possible to store $AA^\top$ at the initialization, Algorithm \ref{algo:main} could be faster in practice.  We refer the reader to \cite{bai2018greedy} for more details.

As is discussed in Remark \ref{xie-remark-3}, we mainly consider the linear systems with high coherence. Let
$$
\delta_{\max}:=\max_{ i,j\in [m],i\neq j}\left|\innL{\frac{a_i}{\nm{a_i}},\frac{a_j}{\nm{a_j}}}\right|
$$
and
$$
\delta_{\text{mean}}:=\frac{2}{m(m-1)}\sum_{1\leq i<j\leq m}\left|\innL{\frac{a_i}{\nm{a_i}},\frac{a_j}{\nm{a_j}}}\right|,
$$
where we define $\frac{0}{0}=0$.  It is obvious that $\delta_{\min}\leq\delta_{\text{mean}}\leq \delta_{\max}$, and $\delta_{\min}$ is defined as \eqref{def-delta}.
We use $\delta_{\min},\delta_{\text{mean}}$, and $\delta_{\max}$ to measure the coherence \cite{donoho2001uncertainty} between the rows of the coefficient matrices. Indeed, a larger value of $\delta_{\text{min}}$ suggests a higher coherence between any two rows of matrix $A$, while a larger value of $\delta_{\text{max}}$ indicates the presence of at least two rows in matrix $A$ that exhibit a high level of coherence. Moreover, if there are a large number of rows in matrix $A$ exhibiting high coherence, then the value of $\delta_{\text{mean}}$ is also large.
All the results are averaged over $20$ trials and we report the average number of iterations (denoted as Iter) and the average computing time in seconds (denoted as CPU). We use $\operatorname{Prob}\left(i_k=i\right)=\frac{\left|\tilde{r}\k_{i}\right|^2}{\left\|\tilde{r}\k\right\|_2^2}
$ as the  probability criterion for selecting the working row.

\subsection{Synthetic data}
In this test, we assign the entries of the coefficient matrix $A\in\mathbb{R}^{m\times n}$ as independent identically distributed uniform random variables within the interval $[t,1]$. It should be noted that varying the value of $t$ will appropriately change the coherence of $A$. As the value of $t$ approaches $1$, the correlation between the rows of $A$ becomes stronger, which means that the value of $\delta_{\min}$ becomes larger; See Figure \ref{figue1-1}.

\begin{figure}[hptb]
	\centering
	\begin{tabular}{cc}
		\includegraphics[width=0.45\linewidth]{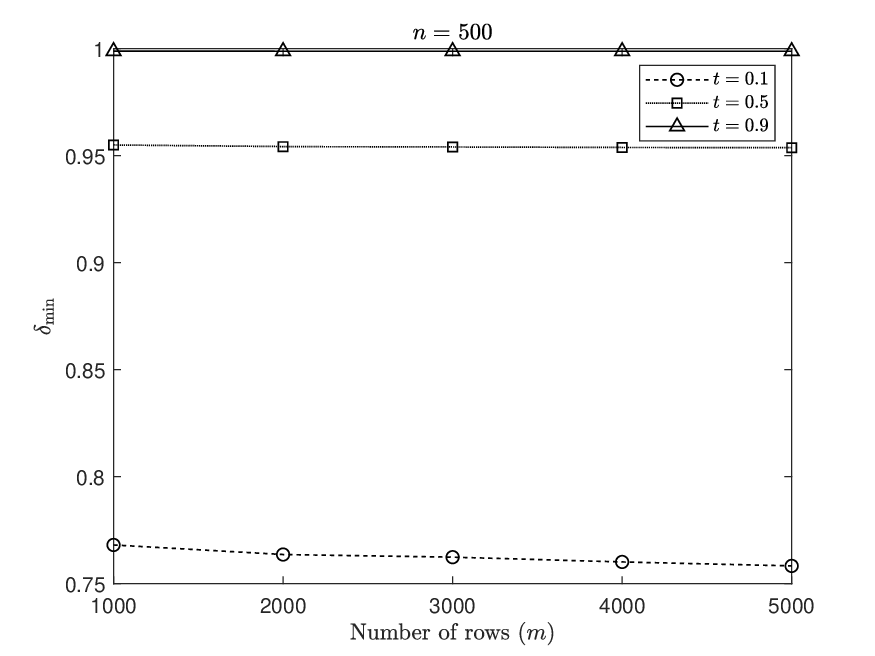}
		\includegraphics[width=0.45\linewidth]{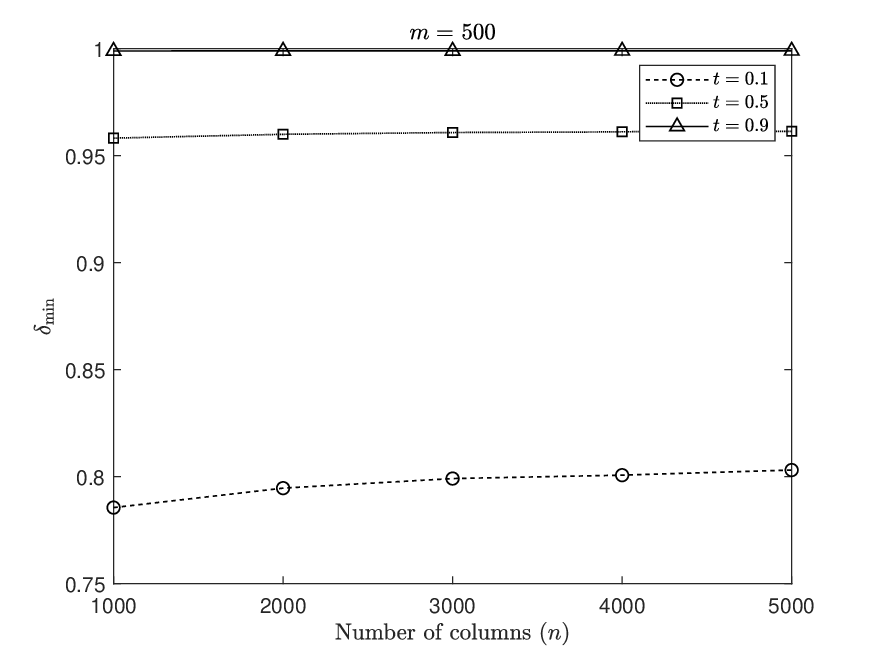}
	\end{tabular}
	\caption{Figures depict the value of $\delta_{\min}$ vs increasing number of rows ($n=500$) or columns ($m=500$). We set $t=0.1,0.5$, and $0.9$, respectively.}
	\label{figue1-1}
\end{figure}

Figures \ref{figue1} and \ref{figue2} illustrate the relationship between the CPU time, number of iterations, and the number of rows or columns of the matrix $A$ for GRK, MIRK, and GMIRK.
We have excluded the GRK method for $t=0.9$ due to its number of iterations exceeding $1,000,000$.
From these figures, it can be observed that the GMIRK method requires significantly fewer iteration steps and less computing time compared to both the GRK method and the MIRK method. The performance of the GRK method is highly sensitive to changes in $t$, whereas MIRK and GMIRK are not. As $t$ increases, the number of iteration steps and the CPU time of the GRK method increase rapidly, while those of the MIRK and the GMIRK method remain relatively stable or even decrease (see Figure \ref{figue1}).

\begin{figure}[hptb]
	\centering
	\begin{tabular}{cc}
		\includegraphics[width=0.45\linewidth]{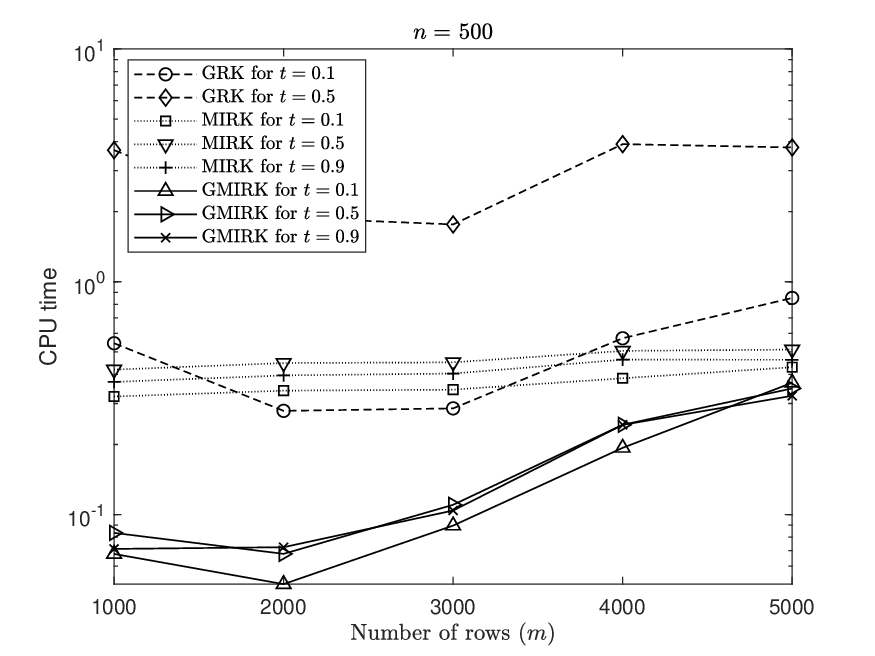}
		\includegraphics[width=0.45\linewidth]{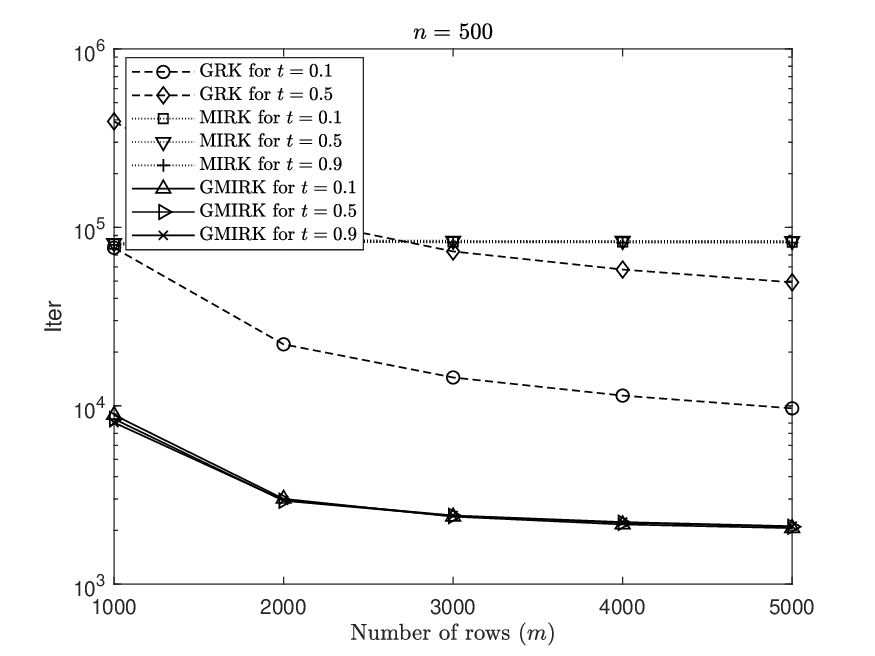}
	\end{tabular}
	\caption{Figures depict the CPU time (in seconds) and iteration vs increasing number of rows when $n=500$. We set $t=0.1,0.5$, and $0.9$, respectively. }
	\label{figue1}
\end{figure}

\begin{figure}[hptb]
	\centering
	\begin{tabular}{cc}
		\includegraphics[width=0.45\linewidth]{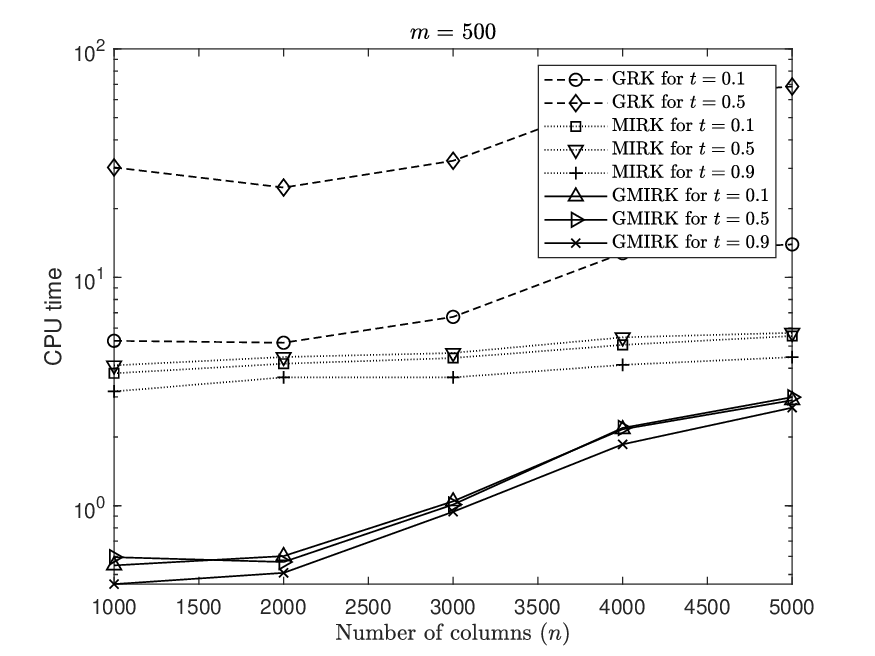}
		\includegraphics[width=0.45\linewidth]{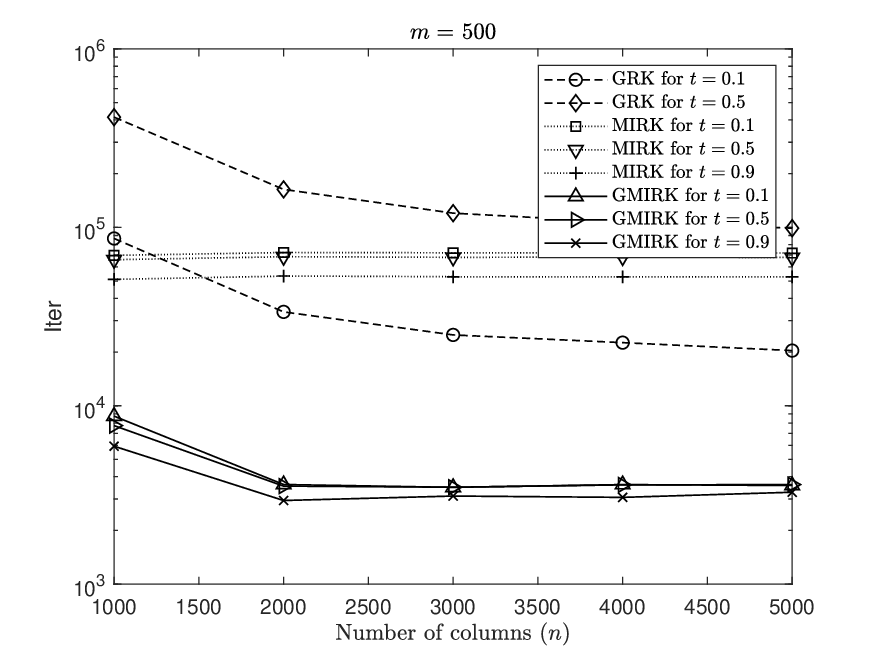}
	\end{tabular}
	\caption{ Figures depict the CPU time (in seconds) and iteration vs increasing number of columns when $m=100$.  We set $t=0.1,0.5$, and $0.9$, respectively. }
	\label{figue2}
\end{figure}

\subsection{Real-world data} The real-world data are available via the SuiteSparse Matrix Collection \cite{Kol19}. The matrices used in the experiments include {\tt bibd\_16\_8}, {\tt crew1}, {\tt WorldCities}, {\tt nemsafm}, {\tt model1}, {\tt ash958}, {\tt Franz1}, and {\tt mk10-b2}. Some of these matrices are full rank, while others are rank-deficient; See Table \ref{table1}.
Each dataset consists of a matrix $A\in\mathbb{R}^{m\times n}$ and a vector $b\in\mathbb{R}^m$.
In our experiments, we only use the matrices $A$ of the datasets and ignore the vector $b$.
In Table \ref{table2}, we report the number of iterations and the computing times for GRK, MIRK, and GMIRK. It is evident from the table that both GMIRK and GRK outperform the MIRK method. Furthermore, the GMIRK method demonstrates superiority over the GRK method in terms of the number of iterations in all test cases. Particularly, for the data sets with high coherence, such as {\tt bibd\_16\_8}, {\tt crew1}, and {\tt WorldCities}, the GMIRK method is at least two times faster than the GRK method in terms of CPU time.
However, for data sets with low coherence, such as {\tt model1}, {\tt ash958}, {\tt Franz1}, and {\tt mk10-b2}, the GRK method exhibits a slight advantage over the GMIRK method in terms of CPU time.
This is because both GMIRK and GRK now require almost the same number of iterations,  but GMIRK has a higher computational cost at each step compared to GRK.

\begin{table}
	\renewcommand\arraystretch{1.5}
	\setlength{\tabcolsep}{2pt}
	\caption{ The data sets from SuiteSparse Matrix Collection \cite{Kol19}. }
	\label{table1}
	\centering
	{\scriptsize
		\begin{tabular}{  |c| c| c| c| c |  }
			\hline
			Matrix & $m\times n$ &  rank &  $\frac{\sigma_{\max}(A)}{\sigma_{\min}(A)}$  & $(\delta_{\min},\delta_{\text{mean}},\delta_{\max})$    \\
			\hline
			{\tt bibd\_16\_8}& $120\times12870$ &  120  & 9.54 &    $(0.1648,0.2269,0.4286)$   \\
			\hline
			{\tt crew1} & $135\times6469 $ &  135  &18.20  &   $(0,0.0463,0.9329)$  \\
			\hline
			{\tt WorldCities} & $315\times100$ &  100  &6.60  &   $(0,0.3333,1)$  \\
			\hline
			{\tt nemsafm} & $334\times 2348$ &   334  &4.77  &   $(0,0.0018,0.4082)$   \\
			\hline
			{\tt model1} & $  362\times798 $ &  362  & 17.57 &$(0,0.0028,0.7069)$   \\
			\hline
			{\tt ash958} & $958\times292$ &  292  &3.20  &   $(0,0.0063,0.5000)$   \\
			\hline
			{\tt Franz1} & $ 2240\times768 $ &  755  & 2.74e+15 &  $(0,0.0025,0.5000)$  \\
			\hline
			{\tt mk10-b2} & $  3150\times630 $ &  586 & 2.74e+15 &$(0,0.0044,0.3333)$   \\
			\hline
		\end{tabular}
	}
\end{table}

\begin{table}
	\renewcommand\arraystretch{1.5}
	\setlength{\tabcolsep}{2pt}
	\caption{ The average Iter and CPU of GRK, MIRK, and GMIRK for solving linear systems with coefficient matrices in Table \ref{table1}. }
	\label{table2}
	\centering
	{\scriptsize
		\begin{tabular}{  |c| c|  c| c |c |c |c|  }
			\hline
			\multirow{2}{*}{ Matrix}  &\multicolumn{2}{c| }{GRK} &\multicolumn{2}{c| }{MIRK}  &\multicolumn{2}{c| }{GMIRK}
			\\
			\cline{2-7}
			&   Iter&   CPU & Iter & CPU    & Iter & CPU      \\
			\hline
			{\tt bibd\_16\_8}& 2168.90  &   4.0391 &  5941.70  &   4.5654 &  1226.80  &   {\bf 2.0495}    \\
			\hline
			{\tt crew1} & 6100.00  &   2.6995 & 28178.40  &   9.3005 &  2475.40  &   {\bf0.9595}   \\
			\hline
			{\tt WorldCities} & 15063.80  &   1.2245 & 58830.90  &   2.2064 &  6653.50  &  {\bf 0.5965}   \\
			\hline
			{\tt nemsafm} & 2604.10  &   0.2382 & 31668.30  &   2.4590 &  2423.40  &   {\bf0.2331}  \\
			\hline
			{\tt model1} & 11583.80  &  {\bf 0.6487} & 117828.80  &   5.8190 & 10194.50  &   0.6841  \\
			\hline
			{\tt ash958} & 1615.00  &   {\bf0.0768} & 12371.50  &   0.4878 &  1562.70  &   0.0910  \\
			\hline
			{\tt Franz1} & 15654.20  &   {\bf1.1031} & 71668.60  &   4.6155 & 14635.50  &   1.1963  \\
			\hline
			{\tt mk10-b2} & 2338.90  & {\bf  0.1910} & 16803.40  &   1.2553 &  2342.10  &   0.2173   \\
			\hline
		\end{tabular}
	}
\end{table}

	\section{Concluding remarks}

This paper presented the GMIRK method, an accelerated variant of the randomized Kaczmarz method, for solving large-scale linear systems. The method combines two techniques: the  greedy probability criterion and the multi-step inertial extrapolation approach. Moreover, we introduced a tighter threshold parameter for the greedy probability criterion.
We showed that the GMIRK method converges faster than the GRK method in both theory and experiments.
Furthermore, we elaborated the geometric interpretation of the multi-step inertial extrapolation approach, establishing its connection with skectch-and-project method \cite{15M1025487,19M1285846} as well as oblique projection technique \cite{LI2022100342}.

In \cite{Lev10}, Leventhal and Lewis  have extended the randomized Kaczmarz method to solve the linear feasibility problem $Ax\leq b$. It would be interesting to explore the potential benefits of combining inertial extrapolation with the greedy probability criterion for solving the linear feasibility problem.

\bibliographystyle{plain}
\bibliography{bibliography}

\begin{thebibliography}{10}

\bibitem{bai2023convergence}
Zhong-Zhi Bai and Lu~Wang.
\newblock On convergence rates of kaczmarz-type methods with different
  selection rules of working rows.
\newblock {\em Appl. Numer. Math.}, 186:289--319, 2023.

\bibitem{bai2018greedy}
Zhong-Zhi Bai and Wen-Ting Wu.
\newblock On greedy randomized {K}aczmarz method for solving large sparse
  linear systems.
\newblock {\em SIAM J. Sci. Comput.}, 40(1):A592--A606, 2018.

\bibitem{bai2019greedy}
Zhong-Zhi Bai and Wen-Ting Wu.
\newblock On greedy randomized coordinate descent methods for solving large
  linear least-squares problems.
\newblock {\em Numer. Linear Algebra Appl.}, 26(4), 2019.

\bibitem{bai2021greedy}
Zhong-Zhi Bai and Wen-Ting Wu.
\newblock On greedy randomized augmented {K}aczmarz method for solving large
  sparse inconsistent linear systems.
\newblock {\em SIAM J. Sci. Comput.}, 43(6):A3892--A3911, 2021.

\bibitem{bai2023randomized}
Zhong-Zhi Bai and Wen-Ting Wu.
\newblock Randomized {K}aczmarz iteration methods: Algorithmic extensions and
  convergence theory.
\newblock {\em Jpn. J. Ind. Appl. Math.}, pages 1--23, 2023.

\bibitem{Byr04}
Charles Byrne.
\newblock A unified treatment of some iterative algorithms in signal processing
  and image reconstruction.
\newblock {\em Inverse Probl.}, 20(1):103--120, 2003.

\bibitem{Cha08}
Kai-Wei Chang, Cho-Jui Hsieh, and Chih-Jen Lin.
\newblock Coordinate descent method for large-scale l2-loss linear support
  vector machines.
\newblock {\em J. Mach. Learn. Res.}, 9(7):1369--1398, 2008.

\bibitem{chen2022fast}
Jia-Qi Chen and Zheng-Da Huang.
\newblock On a fast deterministic block {K}aczmarz method for solving
  large-scale linear systems.
\newblock {\em Numer. Algorithms}, 89(3):1007--1029, 2022.

\bibitem{dong2019mikm}
Q.L. Dong, J.Z. Huang, X.H. Li, Y.J. Cho, and Th.~M. Rassias.
\newblock {MiKM}: multi-step inertial {K}rasnosel'ski\v{i}--mann algorithm and
  its applications.
\newblock {\em J. Global Optim.}, 73:801--824, 2019.

\bibitem{donoho2001uncertainty}
David~L Donoho and Xiaoming Huo.
\newblock Uncertainty principles and ideal atomic decomposition.
\newblock {\em IEEE Trans. Inf. Theory}, 47(7):2845--2862, 2001.

\bibitem{feichtinger1992new}
Hans~Georg Feichtinger, C~Cenker, M~Mayer, H~Steier, and Thomas Strohmer.
\newblock New variants of the {POCS} method using affine subspaces of finite
  codimension with applications to irregular sampling.
\newblock In {\em Visual Communications and Image Processing'92}, volume 1818,
  pages 299--310. SPIE, 1992.

\bibitem{gordon1970algebraic}
Richard Gordon, Robert Bender, and Gabor~T. Herman.
\newblock Algebraic reconstruction techniques ({ART}) for three-dimensional
  electron microscopy and {X}-ray photography.
\newblock {\em J. Theor. Biol.}, 29(3):471--481, 1970.

\bibitem{gower2021adaptive}
Robert~M. Gower, Denali Molitor, Jacob Moorman, and Deanna Needell.
\newblock On adaptive sketch-and-project for solving linear systems.
\newblock {\em SIAM J. Matrix Anal. Appl.}, 42(2):954--989, 2021.

\bibitem{19M1285846}
Robert~M. Gower, Denali Molitor, Jacob Moorman, and Deanna Needell.
\newblock On adaptive sketch-and-project for solving linear systems.
\newblock {\em SIAM J. Matrix Anal. Appl.}, 42(2):954--989, 2021.

\bibitem{15M1025487}
Robert~M. Gower and Peter Richt\'{a}rik.
\newblock Randomized iterative methods for linear systems.
\newblock {\em SIAM J. Matrix Anal. Appl.}, 36(4):1660--1690, 2015.

\bibitem{hansen2021computed}
Per~Christian Hansen, Jakob J{\o}rgensen, and William~RB Lionheart.
\newblock {\em Computed tomography: algorithms, insight, and just enough
  theory}.
\newblock SIAM, Philadelphia, 2021.

\bibitem{he2023inertial}
Songnian He, Ziting Wang, and Qiao-Li Dong.
\newblock Inertial randomized {K}aczmarz algorithms for solving coherent linear
  systems.
\newblock {\em Numer. Algorithms}, 2024.
\newblock https://doi.org/10.1007/s11075-024-01872-2.

\bibitem{herman1993algebraic}
Gabor~T Herman and Lorraine~B Meyer.
\newblock Algebraic reconstruction techniques can be made computationally
  efficient (positron emission tomography application).
\newblock {\em IEEE Trans. Med. Imaging}, 12(3):600--609, 1993.

\bibitem{kaczmarz1937angenaherte}
S~Karczmarz.
\newblock Angen{\"{a}}herte aufl{\"{o}}sung von systemen linearer glei-chungen.
\newblock {\em Bull. Int. Acad. Pol. Sic. Let., Cl. Sci. Math. Nat.}, pages
  355--357, 1937.

\bibitem{Kol19}
Scott~P Kolodziej, Mohsen Aznaveh, Matthew Bullock, Jarrett David, Timothy~A
  Davis, Matthew Henderson, Yifan Hu, and Read Sandstrom.
\newblock The suitesparse matrix collection website interface.
\newblock {\em J. Open Source Softw.}, 4(35):1244, 2019.

\bibitem{Lev10}
Dennis Leventhal and Adrian~S Lewis.
\newblock Randomized methods for linear constraints: convergence rates and
  conditioning.
\newblock {\em Math. Oper. Res.}, 35(3):641--654, 2010.

\bibitem{LI2022100342}
Weiguo Li, Qin Wang, Wendi Bao, and Lili Xing.
\newblock Kaczmarz method with oblique projection.
\newblock {\em Results Appl. Math.}, 16:100342, 2022.

\bibitem{natterer2001mathematics}
Frank Natterer.
\newblock {\em The mathematics of computerized tomography}.
\newblock SIAM, Philadelphia, 2001.

\bibitem{needell2013two}
Deanna Needell and Rachel Ward.
\newblock Two-subspace projection method for coherent overdetermined systems.
\newblock {\em J. Fourier Anal. Appl.}, 19(2):256--269, 2013.

\bibitem{niu2020greedy}
Yu-Qi Niu and Bing Zheng.
\newblock A greedy block {K}aczmarz algorithm for solving large-scale linear
  systems.
\newblock {\em Appl. Math. Lett.}, 104:106294, 2020.

\bibitem{Pat17}
Andrei Patrascu and Ion Necoara.
\newblock Nonasymptotic convergence of stochastic proximal point methods for
  constrained convex optimization.
\newblock {\em J. Mach. Learn. Res.}, 18(1):7204--7245, 2017.

\bibitem{polyak1964some}
Boris~T Polyak.
\newblock Some methods of speeding up the convergence of iteration methods.
\newblock {\em Comput. Math. Math. Phys.}, 4(5):1--17, 1964.

\bibitem{strohmer2009randomized}
Thomas Strohmer and Roman Vershynin.
\newblock A randomized {K}aczmarz algorithm with exponential convergence.
\newblock {\em J. Fourier Anal. Appl.}, 15(2):262--278, 2009.

\bibitem{su2023linear}
Yansheng Su, Deren Han, Yun Zeng, and Jiaxin Xie.
\newblock On the convergence analysis of the greedy randomized kaczmarz method.
\newblock {\em arXiv preprint arXiv:2307.01988}, 2023.

\bibitem{sun2021worstcase}
Ruoyu Sun and Yinyu Ye.
\newblock Worst-case complexity of cyclic coordinate descent: {$O(n^2)$} gap
  with randomized version.
\newblock {\em Math. Program.}, 185(1-2):487--520, 2021.

\bibitem{wang2022greedy}
Fang Wang, Weiguo Li, Wendi Bao, and Li~Liu.
\newblock Greedy randomized and maximal weighted residual {K}aczmarz methods
  with oblique projection.
\newblock {\em Electron. Res. Arch.}, 30(4):1158--1186, 2022.

\bibitem{wu2022two}
Wen-Ting Wu.
\newblock On two-subspace randomized extended {K}aczmarz method for solving
  large linear least-squares problems.
\newblock {\em Numer. Algorithms}, 89(1):1--31, 2022.

\bibitem{yuan2022adaptively}
Zi-Yang Yuan, Lu~Zhang, Hongxia Wang, and Hui Zhang.
\newblock Adaptively sketched {B}regman projection methods for linear systems.
\newblock {\em Inverse Probl.}, 38(6):065005, 2022.

\bibitem{zhang2019new}
Jian-Jun Zhang.
\newblock A new greedy {K}aczmarz algorithm for the solution of very large
  linear systems.
\newblock {\em Appl. Math. Lett.}, 91:207--212, 2019.

\bibitem{zhang2022greedy}
Yanjun Zhang and Hanyu Li.
\newblock Greedy capped nonlinear {K}aczmarz methods.
\newblock {\em J. Comput. Appl. Math.}, page 116067, 2024.

\end{thebibliography}

\end{document}